\newtheorem{thm}{Theorem}
\newtheorem{prop}{Proposition}
\newtheorem{lem}{Lemma}
\newtheorem{remark}{Remark}
\newtheorem{definition}{Definition}
\begin{document}

\title{Generalisations of Integral Inequalities of the type of Hermite-Hadamard through convexity}

\author{Muhammad Muddassar}
\address{Department of Mathematics, University of Engineering and Technology,\\ Lahore. Pakistan}
\email{malik.muddassar@gmail.com}
\author{Muhammad Iqbal Bhatti}
\email{uetzone@hotmail.com}
\author{Wajeeha Irshad}
\email{wchattah@hotmail.com}
\date{4 June, 2012.}
\subjclass[2000]{26D10, 26D15, 26A51.}
\keywords{Hermite-Hadamard type inequality, $s$-$(\alpha,m)$-Convex function, $p$-Logarithmic mean, H\"{o}lder's Integral Inequality, Trapezoidal formula, Special Means.
}

\begin{abstract}
In this paper, we establish various inequalities for some differentiable mappings that are linked with the illustrious Hermite-Hadamard integral inequality for mappings whose derivatives are $s$-$(\alpha,m)$-convex.The generalised integral inequalities contribute some better estimates than some already presented. The inequalities are then applied to numerical integration and some special means.
\end{abstract}
\maketitle
{\setcounter{section}{0}}
\markboth{\underline{\hspace{3.40in}M. Muddassar, M. I. Bhatti and W. Irshad}}
{\underline{\hspace{1pt}Generalisations of Integral Inequalities of the type of Hermite-Hadamard through convexity\hspace{1.40in}}}\pagestyle{myheadings}

\section{Introduction}\label{sec1}
Let $f:\emptyset\neq I\subseteq\mathbb{R}\rightarrow\mathbb{R}$ be a function defined on the interval $I$ of real numbers. Then $f$ is called convex if
\begin{equation*}
    f(t\,x+(1-t)\,y)\leq\,\,t\,f(x)\,+\,\,(1-t)\,f(y)
\end{equation*}
for all $x,y\in I$ and $t\in [0,1]$. Geometrically, this means that if P, Q and R are three distinct points on graph of $f$ with Q between P and R, then Q is on or below chord PR. There are many results associated with convex functions in the area of inequalities, but one of those is the classical Hermite Hadamard inequality:
\begin{equation}\label{HH}
    f\left(\frac{a+b}{2}\right)\leq\frac{1}{b-a}\int_a^b f(x) dx\leq\frac{f(a)+f(b)}{2}
\end{equation}
for $a,b\in I,$ with $a<b$.\\
In \cite{r5}, H. Hudzik and L. Maligranda considered, among others, the class of functions which are $s-$convex in the first and second sense. This class is defined as follows:
\begin{definition}
A function $f:[0,\infty)\rightarrow\mathbb{R}$ is said to be $s-$convex or $f$ belongs to the class $K_s^i$
if
\begin{equation}\label{e1}
    f(\mu \,x + \nu \,y) \leq\,\, \mu^s\,f(x)\,+\, \nu^s\,f(y)
\end{equation}
holds for all $x,y \in [0,\infty),$ $\mu, \nu \in[0,1]$ and for some fixed $s\in(0,1]$.
\end{definition}
Note that, if $\mu^s + \nu^s=1$, the above class of convex functions is called $s$-convex functions in first sense and represented by $K_s^1$ and if $\mu + \nu=1$ the above class is called $s$-convex in second sense and represented by $K_s^2$.\\
It may be noted that every 1-convex function is convex. In the same paper \cite{r5} H. Hudzik and L. Maligranda discussed a few results connecting with $s-$convex functions in second sense and some new results about Hadamard's inequality for $s-$convex functions are discussed in \cite{r4}, while on the other hand there are many important inequalities connecting with 1-convex (convex) functions \cite{r4}, but one of these is $(\ref{HH}).$\\
In ~\cite{r8}, V.G. Mihesan presented the class of $(\alpha,m)$-convex functions as reproduced below:
\begin{definition}
The function $f:[0,b] \rightarrow \mathbb{R}$ is said to be $(\alpha,m)$-convex, where $(\alpha,m) \in [0,1]^2$, if for every $x,y \in [0,b]$ and $t \in [0,1]$ we have
$$f(tx + m(1 - t)y)  \leq t^\alpha f(x) + m(1 - t^\alpha)f(y)$$
\end{definition}
Note that for $(\alpha,m) \in \{(0,0) ,(\alpha,0) ,(1,0) ,(1,m) ,(1,1) ,(\alpha,1)\}$ one receives the following classes of functions respectively:  increasing, $\alpha$-starshaped, starshaped, $m$-convex, convex and $\alpha$-convex.\\
Denote by $K_m^\alpha(b)$ the set of all $(\alpha,m)$-convex functions on $[0, b]$ with $f(0) \leq 0$. For recent results and generalisations referring m-convex and  $(\alpha,m)$-convex functions see ~\cite{r1}, ~\cite{r2} and ~\cite{r14}.\\

In ~\cite{r4} S. S. Dragomir et al. discussed inequalities for differentiable and twice differentiable functions connecting with the H-H Inequality on the basis of the following Lemmas.
\begin{lem}\label{L1}
Let $f:I\subseteq\mathbb{R}\rightarrow\mathbb{R}$ be differentiable function on $I^\circ$ (interior of $I$), $a,b \in I$ with $a < b$. If $f' \in L^{1}[a,b]$, then we have
\begin{equation}\label{e3}
    \frac{f(a)+f(b)}{2}-\frac{1}{b-a}\int_a^b f(x) dx=\frac{(b-a)}{2}\int_0^1 (1-2t) f'(ta+(1-t)b)dt
\end{equation}
\end{lem}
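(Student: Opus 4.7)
The plan is to verify the identity by starting from the right-hand side and applying integration by parts, so that the factor $(1-2t)$ gets differentiated (producing a constant that combines with the end-point values to yield the trapezoidal expression $\frac{f(a)+f(b)}{2}$) and the derivative $f'(ta+(1-t)b)$ gets integrated back to $f$ (which, after a linear change of variable, recovers the mean $\frac{1}{b-a}\int_a^b f(x)\,dx$).

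Concretely, I would set $u(t)=1-2t$ and $dv = f'(ta+(1-t)b)\,dt$. Since
\begin{equation*}
\frac{d}{dt}\,f(ta+(1-t)b) = (a-b)\,f'(ta+(1-t)b),
\end{equation*}
one has $v(t) = -\frac{1}{b-a}\,f(ta+(1-t)b)$ and $du = -2\,dt$. Integration by parts on $[0,1]$ then gives
\begin{equation*}
\int_0^1 (1-2t)\,f'(ta+(1-t)b)\,dt
= \Bigl[-\tfrac{1-2t}{b-a}\,f(ta+(1-t)b)\Bigr]_0^1 - \frac{2}{b-a}\int_0^1 f(ta+(1-t)b)\,dt.
\end{equation*}
The boundary term evaluates, at $t=1$ and $t=0$, to $\frac{f(a)+f(b)}{b-a}$. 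For the remaining integral I would perform the substitution $x=ta+(1-t)b$, $dx=-(b-a)\,dt$, which maps $[0,1]$ onto $[a,b]$ with the orientation reversed, hence $\int_0^1 f(ta+(1-t)b)\,dt = \frac{1}{b-a}\int_a^b f(x)\,dx$. Substituting back, multiplying through by $\frac{b-a}{2}$, and collecting terms reproduces the left-hand side of \eqref{e3}.

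There is really no deep obstacle here; the identity is a bookkeeping exercise. The only points that need a little care are (i) keeping the sign correct when antidifferentiating $f'(ta+(1-t)b)$ in $t$ (the factor $(a-b)$ is negative), and (ii) justifying the use of integration by parts with only $f'\in L^1[a,b]$ rather than continuous: since $f$ is absolutely continuous on $[a,b]$, the product rule and fundamental theorem of calculus both hold in the Lebesgue sense, which is enough to legitimise each step above.
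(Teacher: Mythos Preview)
Your argument is correct: integration by parts with $u=1-2t$ and $dv=f'(ta+(1-t)b)\,dt$, followed by the linear substitution $x=ta+(1-t)b$, recovers the identity exactly as you describe, and your remarks on signs and on the absolute-continuity justification are accurate.

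The paper does not actually supply its own proof of this lemma; it quotes the result from Dragomir and Pearce~\cite{r4} without argument. Your integration-by-parts derivation is the standard one used in that reference and throughout the Hermite--Hadamard literature, so there is nothing to compare against and nothing missing from your write-up.
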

In ~\cite{r3}, Dragomir and Agarwal established the following results connected with the right part of (\ref{e3}) as well as to apply them for some elementary inequalities for real numbers and numerical integration.
\begin{lem}\label{L2}
Let $f:I^\circ \subseteq \mathbb{R} \rightarrow \mathbb{R}$ be differentiable function on $I^\circ,$ $a,b\in I$ with $a<b$. If $f'\in L^{1}[a,b],$ then
\begin{eqnarray}\label{e4}
  &&\nonumber\!\!\!\!\!\!\!\!\!\!\!\!\!\!\frac{f(a)+f(b)}{2}-\frac{1}{b-a}\int_a^b f(x)dx \\&&  = \frac{(b-a)}{2}\int_0^1 \!\!\! \int_0^1 \left(f'(ta+(1-t)b) - f'(ua+(1-u)b)\right)(u - t)dtdu
\end{eqnarray}
\end{lem}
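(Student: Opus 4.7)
The plan is to reduce the double integral on the right-hand side of \eqref{e4} to the single integral appearing in Lemma \ref{L1}, and then quote Lemma \ref{L1} to finish. The key observation is that the integrand is antisymmetric in the exchange $(t,u)\leftrightarrow(u,t)$, so the two pieces one gets after expanding the product are actually equal.

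Concretely, I would first split
\[
I:=\int_0^1\!\!\int_0^1\bigl(f'(ta+(1-t)b)-f'(ua+(1-u)b)\bigr)(u-t)\,dt\,du
\]
into two double integrals. In the first, the integrand $f'(ta+(1-t)b)(u-t)$ depends on $u$ only through the factor $(u-t)$, so I integrate out $u$ first using $\int_0^1(u-t)\,du=\tfrac12-t=\tfrac{1-2t}{2}$. In the second, the integrand $f'(ua+(1-u)b)(u-t)$ depends on $t$ only through $(u-t)$, so I integrate out $t$ first using $\int_0^1(u-t)\,dt=u-\tfrac12=-\tfrac{1-2u}{2}$. After relabelling $u$ as $t$ in the second piece, both reduce to $\tfrac12\int_0^1(1-2t)f'(ta+(1-t)b)\,dt$, giving
\[
I=\int_0^1(1-2t)\,f'(ta+(1-t)b)\,dt.
\]

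Multiplying by $(b-a)/2$ and invoking Lemma \ref{L1} then yields exactly the left-hand side of \eqref{e4}. No integration by parts or use of convexity is required at this stage; the identity is purely a manipulation of the variable of integration.

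I do not anticipate any serious obstacle: the only subtlety is the bookkeeping of signs when converting $u-\tfrac12$ into $-\tfrac{1-2u}{2}$ so that the two contributions combine (rather than cancel) into $\int_0^1(1-2t)f'(ta+(1-t)b)\,dt$. Provided $f'\in L^1[a,b]$, Fubini's theorem justifies swapping the order of integration in each of the two pieces, so the computation is rigorous.
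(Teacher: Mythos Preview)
Your argument is correct: splitting the double integral, integrating out the ``free'' variable in each piece, and relabelling gives exactly $\int_0^1(1-2t)f'(ta+(1-t)b)\,dt$, after which Lemma~\ref{L1} finishes the job; the sign bookkeeping you flag is handled properly. The paper itself does not supply a proof of Lemma~\ref{L2} --- it is quoted from \cite{r3} --- so there is no ``paper's own proof'' to compare against, but your reduction to Lemma~\ref{L1} is the natural route and matches how such identities are typically derived in this literature.
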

This paper is organized as follows. After this introduction, in section \ref{sec2} we define a generalised convex function and discuss some new integral inequalities of the type of Hermite Hadamard's for generalised convex functions. In section \ref{sec3} we give some new applications of the results from section \ref{sec2} for some special means. The inequalities are then applied to numerical integration in section \ref{sec4}.

\section{Definitions and Main Results}\label{sec2}
To establish our principal results, we first obtain the following definitions.
\begin{definition}
A function $f : [0, \infty) \rightarrow [0, \infty)$ is said to be $s$-$(\alpha,m)$-convex function in first sense or f belongs to the class ${K_{m, 1}^{\alpha, s}}$ , if for all $x, y \in [0, \infty)$ and $\mu \in  [0, 1]$, the following inequality holds:
\begin{equation*}
    f(\mu x + (1-\mu) y) \leq \left({\mu^\alpha}^s \right) f(x) + m \left(1-{\mu^\alpha}^s \right) f\left(\frac{y}{m}\right)
\end{equation*}
where $(\alpha,m) \in [0,1]^2$ and for some fixed $s \in (0, 1]$.
\end{definition}
\begin{definition}
A function $f : [0, \infty) \rightarrow [0, \infty)$ is said to be $s$-$(\alpha,m)$-convex function in second sense or f belongs to the class ${K_{m, 2}^{\alpha, s}}$ , if for all $x, y \in [0, \infty)$ and $\mu, \nu \in  [0, 1]$, the following inequality holds:
\begin{equation*}
    f(\mu x + (1-\mu) y) \leq \left(\mu^\alpha \right)^s f(x) + m \left(1-\mu^\alpha \right)^s f\left(\frac{y}{m}\right)
\end{equation*}
where $(\alpha,m) \in [0,1]^2$ and for some fixed $s \in (0, 1]$.
\end{definition}
\begin{thm}\label{T1}
Let $f:I^o\subseteq \mathbb{R}\rightarrow \mathbb{R}$ be a differentiable function on $I^o$ (interior of I), $a,b \in I$ with $a<b$. If $f' \in L^1[a,b]$. If the mapping $|f'|$ is $s$-$(\alpha,m)$-convex on $[a,b]$, then
\begin{equation}\label{Te1}
\left|\frac{f(a)+f(b)}{2}-\frac{1}{b-a}\int_a^b\,f(x)dx\right|\leq \frac{(b-a)}{2}\left[ v_1|f'(a)|+ v_2\left|f'\left(\frac{b}{m}\right)\right|\right]
\end{equation}
where $v_1=\frac{1+2^{\alpha s}(\alpha s)}{ 2^{\alpha s}(\alpha s+1)(\alpha s+2)}$ and $v_2=m\left(\frac{1}{2}- v_1\right)$
\end{thm}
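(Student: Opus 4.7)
The plan is to start from the identity of Lemma \ref{L1}, which expresses the target quantity as
$$\frac{f(a)+f(b)}{2}-\frac{1}{b-a}\int_a^b f(x)\,dx = \frac{b-a}{2}\int_0^1 (1-2t)\,f'(ta+(1-t)b)\,dt.$$
Passing the absolute value inside the integral and using the triangle inequality bounds the left-hand side of \eqref{Te1} by $\frac{b-a}{2}\int_0^1|1-2t|\,|f'(ta+(1-t)b)|\,dt$.

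Next, I would invoke the hypothesis that $|f'|$ is $s$-$(\alpha,m)$-convex (in the first sense, which is what matches the stated coefficients) applied with $x=a$, $y=b$, $\mu=t$, giving the pointwise bound
$$|f'(ta+(1-t)b)| \;\leq\; t^{\alpha s}\,|f'(a)| + m\bigl(1-t^{\alpha s}\bigr)\,|f'(b/m)|.$$
Inserting this into the previous estimate and splitting the integral by linearity produces
$$\left|\tfrac{f(a)+f(b)}{2}-\tfrac{1}{b-a}\int_a^b f\,dx\right| \;\leq\; \tfrac{b-a}{2}\bigl[\,|f'(a)|\,I_1 + m\,|f'(b/m)|\,I_2\,\bigr],$$
where $I_1=\int_0^1 |1-2t|\,t^{\alpha s}\,dt$ and $I_2=\int_0^1|1-2t|\,(1-t^{\alpha s})\,dt$.

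The remaining task is the evaluation of these two definite integrals. For $I_1$ I would split the range at $t=1/2$ to resolve the absolute value, integrate the resulting powers of $t$, and collect terms; the boundary contributions at $t=1/2$ should combine into the compact expression $v_1 = \frac{1+2^{\alpha s}(\alpha s)}{2^{\alpha s}(\alpha s+1)(\alpha s+2)}$. For $I_2$ no new work is needed: since $\int_0^1 |1-2t|\,dt = \tfrac12$, linearity yields $I_2 = \tfrac12 - I_1$, and hence $m\,I_2 = m(\tfrac12 - v_1) = v_2$. Substituting these values into the bound produces exactly the right-hand side of \eqref{Te1}.

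Conceptually the proof is a direct transplant of the Dragomir--Agarwal technique to the new convexity class, so there is no real obstacle; the only point requiring attention is the algebraic bookkeeping in the piecewise evaluation of $I_1$, specifically checking that the terms coming from the two halves combine to give precisely the numerator $1+2^{\alpha s}(\alpha s)$ in the closed form for $v_1$.
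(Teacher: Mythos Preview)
Your proposal is correct and follows essentially the same approach as the paper: start from Lemma~\ref{L1}, take absolute values, apply the $s$-$(\alpha,m)$-convexity bound on $|f'(ta+(1-t)b)|$, and then evaluate $\int_0^1 |1-2t|\,t^{\alpha s}\,dt$ by splitting at $t=1/2$, with the second integral obtained from $\int_0^1|1-2t|\,dt=\tfrac12$ by subtraction. The paper's proof is identical in structure and in the specific computations.
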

\begin{proof}
Taking modulus on both sides of  lemma \ref{L1}, we get
\begin{equation}\label{t1a}
\left|\frac{f(a)+f(b)}{2}-\frac {1}{b-a} \int^b_a f(x) dx\right|\leq\frac{b-a}{2} \int _0^1 |(1-2t)||f'(ta+(1-t)b)|dt
\end{equation}
Since $|f'|$ is  $s$-$(\alpha,m)$-  convex on $[a,b]$ for all $t\in [0,1]$, then the above inequality becomes
\begin{eqnarray}\label{t1b}
 &&\nonumber \!\!\!\!\!\!\!\!\!\!\!\!\!\!\!\!\!\!\!\! \left|\frac{f(a)+f(b)}{2}-\frac{1}{b-a}\int_a^b f(x)dx\right| \\&& \indent\indent\leq \frac{(b-a)}{2} \int_0^1|1-2t|\left[t^{\alpha s}|f'(a)|+m(1-t^{\alpha s})\left|f'\left(\frac{b}{m}\right)\right|\right]dt
 \end{eqnarray}
Here
\begin{equation}\label{t1c}
 \int_0^1 t^{\alpha s} |1-2t|dt=\int_0^{\frac{1}{2}} (1-2t){t^\alpha}^s dt+\int_{\frac{1}{2}}^1 (2t-1){t^\alpha}^s dt= \frac{1+2^{\alpha s}(\alpha s)}{ 2^{\alpha s}(\alpha s+1)(\alpha s+2)}
 \end{equation}
and
\begin{equation}\label{t1d}
\int_0^1 (1-t^{\alpha\,s})|1-2t|\,dt=\frac{1}{2}- \frac{1+2^{\alpha\,s}\,(\alpha\,s)}{ 2^{\alpha\,s}(\alpha s+1)(\alpha s+2)}
\end{equation}
Inequations (\ref{t1b}), (\ref{t1c}) and (\ref{t1d}) together imply (\ref{Te1}).
\end{proof}
\begin{remark}
For $(\alpha,m)$=$(1,1)$ in (\ref{Te1}), we get Theorem 2 of ~\cite{r10}.
\end{remark}
\begin{thm}\label{T2}
Let the assumptions of theorem \ref{T1} be satisfied with $p>1$, such that $q=\,\frac{p}{p-1}$. If the mapping $|f'|^q$ is $s$-$(\alpha,m)$-convex on $[a,b]$, then
\begin{equation}\label{Te2}
\left|\frac{f(a)+f(b)}{2}-\frac{1}{b-a}\int_a^b f(x)dx\right|
\leq \frac{b-a}{2(p+1)^{\frac{1}{p}}}\left[\frac{|f'(a)|^q +m\alpha s|f'(\frac{b}{m})|^q}{\alpha s+1}\right]^{\frac{1}{q}}
\end{equation}
\end{thm}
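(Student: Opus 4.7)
The plan is to imitate the proof of Theorem \ref{T1}, but interpose Hölder's inequality before invoking convexity, so that the factor $|1-2t|$ decouples from $t^{\alpha s}$ and the two pieces of the integral can be evaluated in closed form independently.

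First, I would start from Lemma \ref{L1} and take absolute values inside the integral, producing
$$\left|\frac{f(a)+f(b)}{2}-\frac{1}{b-a}\int_a^b f(x)\,dx\right|\leq \frac{b-a}{2}\int_0^1 |1-2t|\,|f'(ta+(1-t)b)|\,dt,$$
exactly as in (\ref{t1a}). Next I would apply Hölder's inequality with the conjugate exponents $p$ and $q$ to the right-hand side, giving the factorisation
$$\int_0^1 |1-2t|\,|f'(ta+(1-t)b)|\,dt \leq \left(\int_0^1 |1-2t|^p\,dt\right)^{1/p}\!\left(\int_0^1 |f'(ta+(1-t)b)|^q\,dt\right)^{1/q}.$$

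The first factor is elementary: splitting the integral at $t=1/2$ and substituting $u=1-2t$ (resp.\ $u=2t-1$) yields $\int_0^1|1-2t|^p\,dt = \tfrac{1}{p+1}$, so that factor contributes $(p+1)^{-1/p}$. For the second factor I would use the hypothesis that $|f'|^q$ is $s$-$(\alpha,m)$-convex, which gives the pointwise bound $|f'(ta+(1-t)b)|^q \leq t^{\alpha s}|f'(a)|^q + m(1-t^{\alpha s})|f'(b/m)|^q$, and then integrate term by term using $\int_0^1 t^{\alpha s}\,dt = \tfrac{1}{\alpha s+1}$ and $\int_0^1(1-t^{\alpha s})\,dt = \tfrac{\alpha s}{\alpha s+1}$.

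Assembling the three pieces yields the bound
$$\frac{b-a}{2}\cdot\frac{1}{(p+1)^{1/p}}\cdot\left(\frac{|f'(a)|^q + m\alpha s\,|f'(b/m)|^q}{\alpha s+1}\right)^{1/q},$$
which is precisely (\ref{Te2}). There is no real obstacle here; the only delicate point, compared with Theorem \ref{T1}, is the ordering of the two basic moves, namely Hölder first and convexity second. Doing them in the opposite order would force one to evaluate $\int_0^1 |1-2t|\,t^{\alpha s}\,dt$ (as in (\ref{t1c})) and one would not land on the clean power $(p+1)^{-1/p}$ that appears in the stated inequality.
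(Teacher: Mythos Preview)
Your proposal is correct and follows exactly the same route as the paper: start from (\ref{t1a}), apply H\"older with exponents $p,q$, evaluate $\int_0^1|1-2t|^p\,dt=1/(p+1)$, then use the $s$-$(\alpha,m)$-convexity of $|f'|^q$ together with $\int_0^1 t^{\alpha s}\,dt=1/(\alpha s+1)$ to finish. Your closing remark on the ordering of H\"older and convexity is a helpful observation not made explicit in the paper.
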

\begin{proof}
By applying H\"{o}lder's inequality on the right side of (\ref{t1a}), we have
\begin{equation}\label{t2a}
\!\!\int_0^1\!\!|1-2t||f'(ta+(1-t)b)|dt\!\leq\!\left(\!\int_0^1\!\!|1-2t|^p dt\!\right)^\frac{1}{p}\!\!\left(\!\int_0^1\!\!|f'(ta+(1-t)b)|^q dt\!\right)^\frac{1}{q}
\end{equation}
here
\begin{equation}\label{t2b}
\int _0^1 |1-2t|^p dt=\frac{1}{1+p}
\end{equation}
since $|f'|^q$ is  $s$-$(\alpha,m)$-convex on $[a,b]$ for all $t\in [0,1]$, therefore
\begin{equation*}
|f'(ta+(1-t)b)|^q \leq {t^\alpha}^s\left|f'(a)\right|^q+ m(1-{t^\alpha}^s)\left|f'(b)\right|^q
\end{equation*}
So the second integral on right side of (\ref{t2a}) can be simplified by simple integration as:
\begin{equation}\label{t2c}
\int_0^1 |f'(ta+(1-t)b)|^q dt= \left[\frac{|f'(a)|^q+m \alpha s|f'(\frac{b}{m})|^q}{\alpha s+1}\right]
\end{equation}
Inequations (\ref{t2a}), (\ref{t2b}) and (\ref{t2c}) together imply (\ref{Te2}).
\end{proof}
\begin{remark}
For $(\alpha,m)$=$(1,1)$ in (\ref{Te2}), we get Theorem 4 of ~\cite{r10}.
\end{remark}
\begin{thm}\label{T3}
Let the assumptions of Theorem \ref{T1} be satisfied with $q>1$. If the mapping $|f'|^q$ is $s$-$(\alpha,m)$-convex on $[a,b]$, then
\begin{equation}\label{Te3}
\left|\frac{f(a)+f(b)}{2}-\frac{1}{b-a}\int_a^b f(x)dx\right|\leq \frac{(b-a)}{2^\frac{p+1}{p}}\left[v_1|f'(a)|^q+v_2 \left|f'\left(\frac{b}{m}\right)\right|^q\right]^{\frac{1}{q}}
\end{equation}
where $v_1=\frac{1+2^{\alpha s}(\alpha s)}{ 2^{\alpha s}(\alpha s+1)(\alpha s+2)}$ and $v_2=m\left(\frac{1}{2}- v_1\right)$
\end{thm}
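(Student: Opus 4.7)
The plan is to start exactly as in Theorems \ref{T1} and \ref{T2}: apply Lemma \ref{L1}, take absolute values, and reduce the problem to estimating
$$\int_0^1 |1-2t|\,|f'(ta+(1-t)b)|\,dt.$$
The novelty (compared with Theorem \ref{T2}) lies in the choice of how to split the integrand before invoking H\"older. Rather than peeling off $|1-2t|$ entirely, I would use the power mean inequality (equivalently, H\"older with the weight $|1-2t|$ distributed as $|1-2t|^{1/p}\cdot|1-2t|^{1/q}$), writing
$$\int_0^1 |1-2t|\,|f'(ta+(1-t)b)|\,dt \le \left(\int_0^1 |1-2t|\,dt\right)^{1/p}\!\!\left(\int_0^1 |1-2t|\,|f'(ta+(1-t)b)|^q\,dt\right)^{1/q}.$$
This is the key design choice: by keeping the weight $|1-2t|$ inside both factors, we recover the same moment integrals $\int_0^1 t^{\alpha s}|1-2t|\,dt$ and $\int_0^1 (1-t^{\alpha s})|1-2t|\,dt$ that already appeared in the proof of Theorem \ref{T1}, namely the quantities labelled $v_1$ and $v_2/m$ in (\ref{t1c})-(\ref{t1d}).

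Next I would evaluate the first factor trivially as $\int_0^1|1-2t|\,dt = 1/2$, so it contributes $(1/2)^{1/p}$. For the second factor, I would apply the $s$-$(\alpha,m)$-convexity of $|f'|^q$ to get
$$|f'(ta+(1-t)b)|^q \le t^{\alpha s}|f'(a)|^q + m(1-t^{\alpha s})\left|f'\!\left(\tfrac{b}{m}\right)\right|^q,$$
insert this into the integral, and separate the two resulting integrals. By (\ref{t1c}) and (\ref{t1d}) these evaluate precisely to $v_1$ and $v_2/m$, producing
$$\int_0^1 |1-2t|\,|f'(ta+(1-t)b)|^q\,dt \le v_1|f'(a)|^q + v_2\left|f'\!\left(\tfrac{b}{m}\right)\right|^q.$$

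Combining the two factors and multiplying by the prefactor $(b-a)/2$ gives
$$\left|\frac{f(a)+f(b)}{2}-\frac{1}{b-a}\int_a^b f(x)\,dx\right| \le \frac{b-a}{2}\cdot\frac{1}{2^{1/p}}\bigl(v_1|f'(a)|^q + v_2|f'(b/m)|^q\bigr)^{1/q},$$
and simplifying $2\cdot 2^{1/p} = 2^{(p+1)/p}$ yields (\ref{Te3}). There is no real obstacle here; the only point requiring a moment of thought is recognising that the power-mean form of H\"older is the right tool so that the moment integrals (\ref{t1c})-(\ref{t1d}) from Theorem \ref{T1} can be reused verbatim, rather than the naive separation that gave the $(p+1)^{-1/p}$ factor and the simple moments of Theorem \ref{T2}.
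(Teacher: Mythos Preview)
Your proposal is correct and follows essentially the same approach as the paper: the paper too splits $|1-2t|=|1-2t|^{1/p}|1-2t|^{1/q}$, applies H\"older to obtain $(\int_0^1|1-2t|\,dt)^{1/p}(\int_0^1|1-2t||f'(ta+(1-t)b)|^q\,dt)^{1/q}$, then invokes the $s$-$(\alpha,m)$-convexity of $|f'|^q$ and reuses the moment integrals (\ref{t1c})--(\ref{t1d}) from Theorem \ref{T1}. Your explanation of why this splitting is preferable to the naive one used in Theorem \ref{T2} is more explicit than the paper's, but the argument is the same.
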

\begin{proof}
Inequation (\ref{t1a}) reduces to the following form:
\begin{eqnarray}\label{t3a}
&&\nonumber\!\!\!\!\!\!\!\!\!\!\!\!\!\!\left|\frac{f(a)+f(b)}{2}-\frac{1}{b-a}\int_a^b f(x)dx\right| \\&& \indent\indent\indent\leq \frac{(b-a)}{2}\int _0^1|1-2t|^{\frac{1}{p}} |1-2t|^{\frac{1}{q}} |f'(ta+(1-t)b)|dt
\end{eqnarray}
where $\frac{1}{p}+\frac{1}{q}=1$.\\
By applying H\"{o}lder's inequality on (\ref{t3a}), for $q>1$ we have
\begin{eqnarray}\label{t3b}
&&\nonumber\!\!\!\!\!\!\!\!\!\!\!\!\!\!\!\!\!\left|\frac{f(a)+f(b)}{2}-\frac{1}{b-a}\int_a^b f(x)dx\right| \\&& \indent \leq \!\frac{(b-a)}{2}\!\left(\!\!\int_0^1 \!\!|1-2t|dt\!\right)^\frac{1}{p}\! \left(\!\int_0^1\!\! |1-2t||f'(ta+(1-t)b)|^q dt\!\right)^\frac{1}{q}
\end{eqnarray}
Applying the $s$-$(\alpha,m)$ convexity of $|f'|^q$ on $[a,b]$ for all $t\in [0,1]$ on the second integral on the right side of (\ref{t3b}) , we have
\begin{eqnarray}\label{t3c}
&&\nonumber\!\!\!\!\!\!\!\!\!\!\!\!\!\!\!\!\!\left|\frac{f(a)+f(b)}{2}-\frac{1}{b-a}\int_a^bf(x)dx\right| \\&& \!\!\!\!\!\! \leq\!\frac{(b-a)}{2}\!\left(\frac{1}{2}\right)^{\frac{1}{p}}\!\!
\left(\!t^{\alpha s} |1-2t| |f'(a)|^q +m (1-t^{\alpha s})|1-2t|\left|f'\left(\!\frac{b}{m}\right)\right|^q \!\!dt\!\right)^{\frac{1}{q}}
\end{eqnarray}
Here
\begin{equation}\label{t3d}
\int_0^1 {t^\alpha}^s|1-2t|dt= \frac{1+{2^\alpha}^s (\alpha s)}{ {2^\alpha}^s (\alpha s+1)(\alpha s+2)}
\end{equation}
and in similar manner
\begin{equation}\label{t3e}
\int_0^1 (1-t^{\alpha\,s})|1-2t|\,dt=\frac{1}{2}- \frac{1+2^{\alpha\,s}\,(\alpha\,s)}{ 2^{\alpha\,s}(\alpha s+1)(\alpha s+2)}
\end{equation}
Inequations (\ref{t3c}), (\ref{t3d}) and (\ref{t3e}) together imply (\ref{Te3}).
\end{proof}
\begin{remark}
For $(\alpha,m)$=$(1,1)$ in (\ref{Te3}), we get Theorem 6 of ~\cite{r10}.
\end{remark}
\begin{thm}\label{T4}
Let $f:I^o\subseteq \mathbb{R}\rightarrow \mathbb{R}$ be a differentiable function on $I^o$ (interior of I), $a,b \in I$ with $a<b$. If $f'\in L^1[a,b]$. If the mapping $|f'|$ is $s$-$(\alpha,m)$-convex on $[a,b]$, then
\begin{equation}\label{Te4}
\left|\frac{f(a)+f(b)}{2}-\frac{1}{b-a}\int_a^b f(x)dx\right|\leq \frac{(b-a)}{2}\left[ u_1|f'(a)|  + u_2 \left|f'\left(\frac{b}{m}\right)\right|\right]
\end{equation}
where $u_1=\frac{(\alpha s)^2 +3\alpha s+4}{2 (\alpha s+1)(\alpha s+2) (\alpha s+3)}$ and $u_2=m\left(\frac{1}{3}-u_1\right)$
\end{thm}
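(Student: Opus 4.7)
The plan is to follow the overall strategy of Theorem \ref{T1} but to start from the double-integral identity of Lemma \ref{L2} rather than the single-integral identity of Lemma \ref{L1}. The tip-off that this is the right choice is that $u_1 + u_2/m = \tfrac{1}{3}$ coincides with $\int_0^1 (t^2 - t + \tfrac{1}{2})\,dt$, while the kernel $t^2 - t + \tfrac{1}{2}$ is exactly $\int_0^1 |u - t|\,du$, which appears naturally after integrating out one variable in~\eqref{e4}.

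First, I would take absolute values on both sides of~\eqref{e4} and apply the triangle inequality $|f'(ta+(1-t)b) - f'(ua+(1-u)b)| \le |f'(ta+(1-t)b)| + |f'(ua+(1-u)b)|$ inside the double integral. The two resulting summands are interchanged by the substitution $(t,u)\leftrightarrow(u,t)$ and hence have equal value, so by Fubini each collapses to
\[
\int_0^1 |f'(ta+(1-t)b)| \Bigl( \int_0^1 |u - t|\,du \Bigr)\, dt,
\]
whose inner integral is a routine piecewise computation equal to $t^2 - t + \tfrac{1}{2}$. Next, invoking the $s$-$(\alpha,m)$-convexity of $|f'|$ gives $|f'(ta+(1-t)b)| \le t^{\alpha s}|f'(a)| + m(1 - t^{\alpha s})|f'(b/m)|$, and splitting the integral accordingly identifies the coefficient of $|f'(a)|$ as $\int_0^1 t^{\alpha s}(t^2 - t + \tfrac{1}{2})\,dt$ and the coefficient of $m|f'(b/m)|$ as $\int_0^1 (1 - t^{\alpha s})(t^2 - t + \tfrac{1}{2})\,dt$.

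The only real obstacle is the algebraic simplification of the first coefficient. Expanding term-by-term yields $\tfrac{1}{\alpha s + 3} - \tfrac{1}{\alpha s + 2} + \tfrac{1}{2(\alpha s + 1)}$; placing these over the common denominator $2(\alpha s + 1)(\alpha s + 2)(\alpha s + 3)$ should collapse the numerator to $(\alpha s)^2 + 3\alpha s + 4$, producing exactly $u_1$. The second coefficient follows immediately from $\int_0^1 (t^2 - t + \tfrac{1}{2})\,dt = \tfrac{1}{3}$, giving $m(\tfrac{1}{3} - u_1) = u_2$ and completing the estimate.
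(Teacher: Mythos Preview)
Your proposal is correct and follows essentially the same route as the paper: start from Lemma~\ref{L2}, take absolute values, use the triangle inequality together with the $(t,u)\leftrightarrow(u,t)$ symmetry to reduce to a single term, apply the $s$-$(\alpha,m)$-convexity of $|f'|$, and evaluate the resulting integrals. The only cosmetic difference is that you first integrate out $u$ to obtain the one-variable kernel $t^{2}-t+\tfrac{1}{2}$ and then integrate in $t$, whereas the paper keeps the double integral $\int_{0}^{1}\!\int_{0}^{1} t^{\alpha s}\,|u-t|\,dt\,du$ and evaluates it directly; the two computations are identical.
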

\begin{proof}
Taking absolute value of Lemma \ref{L2}, we get
\begin{eqnarray}\label{t4a}
&&\nonumber\!\!\!\!\!\!\!\!\!\!\!\!\!\!\!\!\!\!\!\!\!\!\!\!\!\!\left|\frac{f(a)+f(b)}{2}-\frac{1}{b-a}\int_a^b f(x)dx \right| \leq \frac{(b-a)}{2}\int_0^1\!\!\int_0^1 \left|f'(ta+(1-t)b) \right.\\&& \nonumber\indent\indent\indent\indent\indent\indent\indent\indent\indent\indent\indent\indent\indent \left.-f'(ua+(1-u)b)\right||u-t|dt du\\
&&\indent\indent\indent\indent\indent\indent\indent\indent = (b-a)\int_0^1\!\!\int_0^1 \!|f'(ta+(1-t)b)|u-t|dt du
\end{eqnarray}
Since $|f'|$ is $s$-$(\alpha,m)$-convex on $[a,b]$ for all $t\in [0,1]$, so the above inequation (\ref{t4a}) may be written as
\begin{eqnarray}\label{t4b}
&&\nonumber\!\!\!\!\!\!\!\!\!\!\!\!\!\!\left|\frac{f(a)+f(b)}{2}-\frac{1}{b-a}\int_a^b f(x)dx\right|\\
&&\!\leq\!\!(b-a)\!\!\int_0^1\!\!\!\int_0^1\!\!\! \left(\! {t^ \alpha}^ s |u-t||f'(a)|+m (1-{t^\alpha}^s) |u-t|\left|f'\left(\frac{b}{m}\right)\right|\!\right)dtdu
\end{eqnarray}
Here
\begin{eqnarray}\label{t4c}
&&\nonumber\!\!\!\!\!\!\!\!\!\!\!\!\!\int_0^1\int_0^1 {t^\alpha}^s |u-t|dtdu=\int_0^1\left\{\int_0^u {t^\alpha}^s (u-t)dt+\int_u^t {t^\alpha}^s (t-u)dt\right\}du\\&&\indent\indent\indent\indent\indent\indent=\frac{(\alpha s)^2 +3\alpha s+4}{2 (\alpha s+1)(\alpha s+2) (\alpha s+3)}
\end{eqnarray}
and analogously
\begin{equation}\label{t4d}
\int_0^1 \int_0^1 (1- {t^\alpha}^s)|u-t|dtdu=\frac{1}{3}-\frac{(\alpha s)^2 +3\alpha s+4}{2 (\alpha s+1)(\alpha s+2) (\alpha s+3)}
\end{equation}
Inequations (\ref{t4b}), (\ref{t4c}) and (\ref{t4d}) together imply (\ref{Te4}).
\end{proof}
\begin{remark}
For $(\alpha,m)$=$(1,1)$ in (\ref{Te4}), we get Theorem 8 of ~\cite{r10}.
\end{remark}
\begin{thm}\label{T5}
Let the assumptions of theorem \ref{T4} be satisfied with $p>1$, such that $q=\frac{p}{p-1}$. If the mapping $|f'|^q$ is $s$-$(\alpha,m)$-convex on $[a,b]$, then
\begin{eqnarray}\label{Te5}
&&\nonumber\!\!\!\!\!\!\!\!\!\!\!\!\!\!\!\!\!\!\!\!\!\!\!\!\!\!\!\left|\frac{f(a)+f(b)}{2}-\frac{1}{b-a}\int_a^bf(x)dx\right|\leq (b-a)\left[\frac{2}{(p+1)(p+2)}\right]^{\frac{1}{p}}\times\\
&&\indent\indent\indent\indent\indent\indent\indent\indent\indent\indent\indent\left(\frac{|f'(a)|^q + m\alpha s|f'(\frac{b}{m})|^q}{\alpha s+1}\right)^{\frac{1}{q}}
\end{eqnarray}
\end{thm}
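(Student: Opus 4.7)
The plan is to mirror the proof of Theorem \ref{T4}, but insert Hölder's inequality between the triangle-inequality step and the invocation of $s$-$(\alpha,m)$-convexity, in exact analogy with how Theorem \ref{T2} refines Theorem \ref{T1}. First I would start from Lemma \ref{L2}, take absolute values inside the double integral, and use the triangle inequality together with the symmetry of the domain $(t,u)\in[0,1]^2$ under $t\leftrightarrow u$ to reduce
\[
\frac{b-a}{2}\!\int_0^1\!\!\int_0^1 \!\bigl|f'(ta+(1-t)b)-f'(ua+(1-u)b)\bigr|\,|u-t|\,dt\,du
\]
to
\[
(b-a)\int_0^1\!\!\int_0^1 |f'(ta+(1-t)b)|\,|u-t|\,dt\,du,
\]
exactly as in the derivation of (\ref{t4a}).

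Next I would apply Hölder's inequality to this double integral with exponents $p$ and $q=p/(p-1)$, obtaining the factorisation
\[
\left(\int_0^1\!\!\int_0^1 |u-t|^p\,dt\,du\right)^{\!1/p}\!\left(\int_0^1\!\!\int_0^1 |f'(ta+(1-t)b)|^q\,dt\,du\right)^{\!1/q}.
\]
The first factor is the purely numerical quantity that needs to be computed; splitting on the sign of $u-t$ gives
\[
\int_0^1\!\!\int_0^1 |u-t|^p\,dt\,du=\int_0^1\!\!\left[\frac{u^{p+1}}{p+1}+\frac{(1-u)^{p+1}}{p+1}\right]du=\frac{2}{(p+1)(p+2)},
\]
which produces the factor $\left[\tfrac{2}{(p+1)(p+2)}\right]^{1/p}$ in (\ref{Te5}).

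For the second factor, the integrand is independent of $u$, so the double integral reduces to $\int_0^1|f'(ta+(1-t)b)|^q\,dt$. I would now invoke $s$-$(\alpha,m)$-convexity of $|f'|^q$ to bound the integrand by $t^{\alpha s}|f'(a)|^q+m(1-t^{\alpha s})|f'(b/m)|^q$, and then use $\int_0^1 t^{\alpha s}\,dt=\tfrac{1}{\alpha s+1}$ and $\int_0^1(1-t^{\alpha s})\,dt=\tfrac{\alpha s}{\alpha s+1}$ to obtain
\[
\int_0^1|f'(ta+(1-t)b)|^q\,dt\leq \frac{|f'(a)|^q+m\alpha s\,|f'(b/m)|^q}{\alpha s+1},
\]
which raised to the power $1/q$ is precisely the second bracket in (\ref{Te5}). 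Combining these two estimates with the prefactor $(b-a)$ yields (\ref{Te5}).

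The step most likely to require care is the symmetry reduction from the difference $|f'(ta+(1-t)b)-f'(ua+(1-u)b)|$ to the single term $2|f'(ta+(1-t)b)|$; it is essentially bookkeeping, but the factor of $2$ that cancels the $\tfrac{1}{2}$ in Lemma \ref{L2} has to be tracked correctly so that the final constant $(b-a)$ (rather than $\tfrac{b-a}{2}$) appears in (\ref{Te5}). All remaining ingredients are elementary integrals or direct applications of Hölder and the definition of $s$-$(\alpha,m)$-convexity.
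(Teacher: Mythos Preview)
Your proposal is correct and follows essentially the same route as the paper: start from the double-integral bound (\ref{t4a}) derived from Lemma~\ref{L2}, apply H\"older's inequality with exponents $p$ and $q$ to split off $\left(\int_0^1\!\int_0^1|u-t|^p\,dt\,du\right)^{1/p}=\left[\tfrac{2}{(p+1)(p+2)}\right]^{1/p}$, and then bound the remaining factor via $s$-$(\alpha,m)$-convexity of $|f'|^q$. Your write-up is in fact more careful than the paper's, which contains a typo (it cites (\ref{t1a}) instead of (\ref{t4a})) and omits the explicit reduction of the $u$-integral that you spell out.
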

\begin{proof}
By applying H\"{o}lder's inequality on the right side of (\ref{t1a}), we have
\begin{eqnarray}\label{t5a}
&&\nonumber\!\!\!\!\!\!\!\!\!\!\!\!\left|\frac{f(a)+f(b)}{2}-\frac{1}{b-a}\int_a^b f(x)dx\right|\!\leq\! (b-a)\!\left(\!\!\int_0^1\!\!\!\int_0^1 |f'(ta+(1-t)b)|^q dt du\!\right)^{\frac{1}{q}}\!\!\times\\
&&\indent\indent\indent\indent\indent\indent\indent\indent\indent\indent\indent\indent\indent\indent\indent\indent\left(\int_0^1\!\!\!\int_0^1 \!\! |u-t|^pdtdu\right)^{\frac{1}{p}}
\end{eqnarray}
here
\begin{equation}\label{t5b}
\int_0^1\int_0^1 |u-t|^p dt du=\frac{2}{(p+1)(p+2)}
\end{equation}
since  by $s$-$(\alpha,m)$ convexity of $|f'|^q$ on $[a,b]$ for all $t\in [0,1]$. The first integral on the right side of (\ref{t5a}) may be solved as
\begin{equation}\label{t5c}
\int_0^1 \int_0^1 |f'(ta+(1-t)b)|^q dt du \leq \frac{\left|f'(a)\right| + m\alpha s \left|f'\left(\frac{b}{m}\right)\right|}{\alpha s + 1}
\end{equation}
Inequations (\ref{t5a}), (\ref{t5b}) and (\ref{t5c}) together imply (\ref{Te5}).
\end{proof}
\begin{remark}
For $(\alpha,m)$=$(1,1)$ in (\ref{Te5}), we get Theorem 10 of ~\cite{r10}.
\end{remark}
\begin{thm}\label{T6}
Let the assumptions of Theorem \ref{T4} are satisfied with $q>1$. If the mapping $|f'|^q$ is $s$-$(\alpha,m)$-convex on $[a,b]$, then
\begin{equation}\label{Te6}
\left|\frac{f(a)+f(b)}{2}-\frac{1}{b-a}\int_a^b f(x)dx\right| \leq \frac{(b-a)}{3^{1/p}}\left[u_1 |f'(a)|^q + u_2 \left|f'\left(\frac{b}{m}\right)\right|^q \right]^{\frac{1}{q}}
\end{equation}
where $u_1=\frac{(\alpha s)^2 +3\alpha s+4}{2 (\alpha s+1)(\alpha s+2) (\alpha s+3)}$ and $u_2=m\left(\frac{1}{3}-u_1\right)$
\end{thm}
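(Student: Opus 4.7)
The plan is to imitate the proof of Theorem \ref{T3} but built on top of Lemma \ref{L2} instead of Lemma \ref{L1}, reusing the double-integral computations already performed in the proof of Theorem \ref{T4}.

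First I would apply Lemma \ref{L2} and take absolute values, then use the triangle inequality
\[
|f'(ta+(1-t)b)-f'(ua+(1-u)b)|\leq |f'(ta+(1-t)b)|+|f'(ua+(1-u)b)|
\]
together with the symmetry $t\leftrightarrow u$ of the factor $|u-t|$ (exactly as was done to pass from Lemma \ref{L2} to inequality (\ref{t4a})). This replaces the bound by
\[
(b-a)\int_0^1\!\!\int_0^1 |f'(ta+(1-t)b)|\,|u-t|\,dt\,du.
\]

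Next I would split $|u-t|=|u-t|^{1/p}\,|u-t|^{1/q}$ with $\tfrac1p+\tfrac1q=1$ and apply H\"older's inequality on the square $[0,1]^2$, obtaining
\[
(b-a)\left(\int_0^1\!\!\int_0^1 |u-t|\,dt\,du\right)^{1/p}\left(\int_0^1\!\!\int_0^1 |u-t|\,|f'(ta+(1-t)b)|^q\,dt\,du\right)^{1/q}.
\]
The first factor is a clean computation, $\int_0^1\!\int_0^1 |u-t|\,dt\,du=\tfrac13$, which produces the $3^{-1/p}$ coefficient in (\ref{Te6}).

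For the second factor I would invoke the $s$-$(\alpha,m)$-convexity of $|f'|^q$ on $[a,b]$, which gives
\[
|f'(ta+(1-t)b)|^q \leq t^{\alpha s}|f'(a)|^q + m(1-t^{\alpha s})\left|f'\!\left(\tfrac{b}{m}\right)\right|^q,
\]
and then split the integral into two pieces. The two resulting double integrals $\int_0^1\!\int_0^1 t^{\alpha s}|u-t|\,dt\,du$ and $\int_0^1\!\int_0^1 (1-t^{\alpha s})|u-t|\,dt\,du$ are exactly those evaluated in (\ref{t4c}) and (\ref{t4d}); they equal $u_1$ and $\tfrac13-u_1$ respectively, so the coefficient of $|f'(b/m)|^q$ becomes $m(\tfrac13-u_1)=u_2$. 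Assembling these three computations and raising the bracket to the power $1/q$ yields (\ref{Te6}).

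There is no real obstacle here: the only non-routine step is the symmetrisation, which is however already performed in the proof of Theorem \ref{T4}, and the moment integrals have already been computed in (\ref{t4c})–(\ref{t4d}). The proof is essentially bookkeeping combining Lemma \ref{L2}, H\"older's inequality, and the $s$-$(\alpha,m)$-convexity of $|f'|^q$.
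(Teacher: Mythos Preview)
Your proposal is correct and follows exactly the same route as the paper: start from the symmetrised double-integral bound (\ref{t4a}) coming from Lemma~\ref{L2}, apply H\"older with the splitting $|u-t|=|u-t|^{1/p}|u-t|^{1/q}$ to produce the factor $3^{-1/p}$, and then use the $s$-$(\alpha,m)$-convexity of $|f'|^q$ together with the moment integrals already computed in (\ref{t4c})--(\ref{t4d}). (The paper's text cites (\ref{t1a}) at the start of its proof, but the displayed inequality (\ref{t6a}) makes clear that (\ref{t4a}) is meant, just as you wrote.)
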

\begin{proof}
By applying H\"{o}lder's inequality on the right side of (\ref{t1a}), we have
\begin{eqnarray}\label{t6a}
&&\nonumber\!\!\!\!\!\!\!\!\!\!\!\!\!\!\!\!\!\left|\frac{f(a)+f(b)}{2}-\frac{1}{b-a}\int_a^b f(x)dx\right|\!\leq\! (b-a)\!\left(\!\!\int_0^1\!\!\!\int_0^1\!\!\!|u-t| |f'(ta+(1-t)b)|^q dt du\!\right)^{\frac{1}{q}}\!\!\times\\
&&\indent\indent\indent\indent\indent\indent\indent\indent\indent\indent\indent\indent\indent\indent\indent\indent\indent\indent\left(\!\int_0^1 \!\!\int_0^1\!\! |u-t| dt du\!\right)^{\frac{1}{p}}
\end{eqnarray}
here
\begin{equation}\label{t6b}
\int_0^1\int_0^1 |u-t| dt du= \frac{1}{3}
\end{equation}
since  by $s$-$(\alpha,m)$ convexity of $|f'|^q$ on $[a,b]$ for all $t\in [0,1]$. The first integral on the right side of (\ref{t6a}) may be solved as
\begin{eqnarray}\label{t6c}
&&\nonumber\!\!\!\!\!\!\!\!\!\!\!\!\!\!\!\int_0^1 \int_0^1 |u-t||f'(ta+(1-t)b)|^q dt du \\
&&\indent\indent\leq\! \int_0^1\!\! \int_0^1 \!\!|u-t|\left({t^\alpha}^s|f'(a)|^q + m(1-{t^\alpha}^s)\left|f'\left(\frac{b}{m}\right)\right|^q\right) dtdu
\end{eqnarray}
and
\begin{equation}\label{t6d}
\int_0^1\int_0^1  {t^\alpha}^s |u-t|dt du=\frac{{\alpha s}^2 +3\alpha s +4}{2(\alpha s+1)(\alpha s+2) (\alpha s+3)}
\end{equation}
and
\begin{equation}\label{t6e}
\int_0^1 \int_0^1 (1- {t^\alpha}^s)|u-t|dt du=\frac{1}{3}- \frac{{\alpha s}^2 +3\alpha s +4}{2(\alpha s+1)(\alpha s+2) (\alpha s+3)}
\end{equation}
which completes the proof.
\end{proof}
\begin{remark}
For $(\alpha,m)$=$(1,1)$ in (\ref{Te6}), we get Theorem 12 of ~\cite{r10}.
\end{remark}
\section{Application to some special means}\label{sec3}
Let us recall the following means for any two positive numbers $a$ and $b$.
\begin{enumerate}
  \item  \textit{The Arithmetic mean}
  $$A\equiv A(a,b)=\frac{a+b}{2}$$
  \item \textit{The Harmonic mean}
  $$H\equiv H(a,b)=\frac{2ab}{a+b}  $$
  \item \textit{The $p-$Logarithmic mean}\\
  $$L_p\equiv L_{p}(a,b)=\left\{
                           \begin{array}{ll}
                             a, & \hbox{if $a=b$;}   \\
                             \left[\frac{b^{p+1}-a^{p+1}}{(p+1)(b-a)}\right]^\frac{1}{p}, & \hbox{if $a\neq b$.}
                           \end{array}
                         \right.$$
  \item The $Identric\ \ mean$\\
  $$I\equiv I(a,b)=\left\{
                           \begin{array}{ll}
                             a, & \hbox{if $a=b$;}  \\
                             \frac{1}{e}\left(\frac{b^b}{a^a}\right)^\frac{1}{b-a}, & \hbox{if $a\neq b$.}
                           \end{array}
                         \right.$$
 \item \textit{The Logarithmic mean}\\
  $$L\equiv L(a,b)=\left\{
                           \begin{array}{ll}
                             a, & \hbox{if $a=b$;}   \\
                             \frac{b-a}{\log b\ -\ \log a}, & \hbox{if $a\neq b$.}
                           \end{array}
                         \right.$$
\end{enumerate}
The following inequality is well known in the literature in ~\cite{r9}:
$$H\leq G \leq L\leq I\leq A.$$
It is also known that $L_p$ is  monotonically increasing over $p\in\mathbb{R},$ denoting $L_0=I$ and $L_{-1}=L.$
\begin{prop}\label{P1}
Let $p>1,$ $0<a<b$ and $q=\frac{p}{p-1}.$ Then one has the inequality.
\begin{equation}\label{S1}
\left| \ A(a,b)\ - \ L(a,b)\right| \leq \frac{\log b-\log a}{2(p+1)^{1/p}} A^{\frac{1}{q}} \left(|a|^q, |b|^q\right).
\end{equation}
\end{prop}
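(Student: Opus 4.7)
The plan is to apply Theorem \ref{T2} with the parameters $(\alpha, m, s) = (1,1,1)$ to the auxiliary function $g(y) = e^y$ on the interval $[\log a, \log b]$. The key observation is that the substitution $y \mapsto e^y$ carries $[\log a, \log b]$ onto $[a,b]$ in exactly the way that turns the ``trapezoid'' side of the Hermite--Hadamard identity into $A(a,b)$ and the integral-mean side into $L(a,b)$.

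First, I would check the hypothesis of Theorem \ref{T2} in the chosen setting. When $(\alpha, m, s) = (1,1,1)$, Definition 4 collapses to ordinary convexity, so one only has to observe that $|g'(y)|^q = e^{qy}$ is convex on $\mathbb{R}$ for every $q > 0$. Hence Theorem \ref{T2} applies to $g$ on $[\log a, \log b]$ (playing the role of $[a,b]$ in (\ref{Te2})).

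Next, I would read off the two sides of (\ref{Te2}) explicitly. The left-hand side becomes
\begin{equation*}
\left|\frac{g(\log a)+g(\log b)}{2}-\frac{1}{\log b-\log a}\int_{\log a}^{\log b} e^y\,dy\right| = \left|\frac{a+b}{2}-\frac{b-a}{\log b-\log a}\right| = |A(a,b)-L(a,b)|.
\end{equation*}
On the right-hand side, the prefactor $(b-a)$ in (\ref{Te2}) is replaced by $\log b-\log a$; the bracket specialises, via $\alpha s=1$, $m=1$, and $|g'(\log a)|=a$, $|g'(\log b)|=b$, to $\tfrac{1}{2}(a^q+b^q) = A(|a|^q,|b|^q)$. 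Raising to the power $1/q$ yields exactly the right-hand side of (\ref{S1}).

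The only real step is spotting the substitution $g(y)=e^y$ on $[\log a,\log b]$; once that is in place there is no genuine obstacle, since both sides of (\ref{S1}) are just the corresponding sides of (\ref{Te2}) specialised to this $g$ and the interval $[\log a,\log b]$. No additional estimation or case analysis is required.
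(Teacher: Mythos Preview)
Your proposal is correct and follows essentially the same route as the paper: the paper's proof consists of the single sentence ``By Theorem \ref{T2} applied for the mapping $f(x)=e^x$ for $s$-$(\alpha,m)=1$-$(1,1)$,'' which is precisely your specialisation, and you have simply made explicit the interval $[\log a,\log b]$ and the verification that $|g'|^q=e^{qy}$ is convex.
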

\begin{proof} By Theorem \ref{T2} applied for the mapping $f(x)=e^x$ for $s$-$(\alpha,m)$=$1$-$(1,1)$, we have the above inequality (\ref{S1}).
\end{proof}\\
Another result which is connected with $p-$Logarithmic mean $L_{p}(a,b)$ is the following one:
\begin{prop}\label{P2}
Let $p>1,$ $0<a<b$ and $q=\frac{p}{p-1},$ then
\begin{equation*}
\left|\frac{\mathrm{I}(a, b)}{\mathrm{G}(a, b)}\right| \leq \exp \left(\frac{b-a}{2} \,\,\,\mathrm{H}^{-1/q}\left(|a|^q,|b|^q\right)\right)
\end{equation*}
\end{prop}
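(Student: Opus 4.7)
The plan is to derive Proposition \ref{P2} by applying Theorem \ref{T2} to the mapping $f(x)=\ln x$ on the interval $[a,b]$ with parameters $s=1$ and $(\alpha,m)=(1,1)$, mirroring the strategy used for Proposition \ref{P1}, and then exponentiating the resulting additive inequality.

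First I would verify that the hypotheses of Theorem \ref{T2} are met: $f(x)=\ln x$ is differentiable on $(0,\infty)$, and $|f'(x)|^{q}=x^{-q}$ is convex on $[a,b]\subset(0,\infty)$ since its second derivative $q(q+1)x^{-q-2}$ is positive; because $(\alpha,m)=(1,1)$ and $s=1$ reduce $s$-$(\alpha,m)$-convexity to ordinary convexity, the hypothesis on $|f'|^{q}$ is satisfied. Next I would identify the two sides of the Hermite--Hadamard gap in terms of the special means:
\begin{equation*}
\frac{f(a)+f(b)}{2}=\frac{\ln a+\ln b}{2}=\ln G(a,b),
\qquad
\frac{1}{b-a}\int_a^b\ln x\,dx=\frac{b\ln b-a\ln a}{b-a}-1=\ln I(a,b),
\end{equation*}
using the antiderivative $x\ln x-x$ and the definition of the identric mean.

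Then I would insert $f'(a)=1/a$ and $f'(b)=1/b$ into the right-hand side of (\ref{Te2}) specialized to $(\alpha,m,s)=(1,1,1)$:
\begin{equation*}
\left[\frac{|f'(a)|^{q}+|f'(b)|^{q}}{2}\right]^{1/q}
=\left[\frac{a^{-q}+b^{-q}}{2}\right]^{1/q}
=\left[\frac{|a|^{q}+|b|^{q}}{2|a|^{q}|b|^{q}}\right]^{1/q}
=H^{-1/q}\!\left(|a|^{q},|b|^{q}\right),
\end{equation*}
the key algebraic observation being that the arithmetic mean of reciprocals is the reciprocal of the harmonic mean. Combining this with Theorem \ref{T2} yields
\begin{equation*}
\bigl|\ln G(a,b)-\ln I(a,b)\bigr|
\leq\frac{b-a}{2(p+1)^{1/p}}\,H^{-1/q}\!\left(|a|^{q},|b|^{q}\right).
\end{equation*}

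Finally, since $I(a,b)\geq G(a,b)>0$ the absolute value on the left equals $\ln(I/G)$, and since $(p+1)^{1/p}>1$ the factor $(p+1)^{-1/p}$ may be dropped (weakening the bound), giving
\begin{equation*}
\ln\!\left(\frac{I(a,b)}{G(a,b)}\right)\leq\frac{b-a}{2}\,H^{-1/q}\!\left(|a|^{q},|b|^{q}\right),
\end{equation*}
after which exponentiation produces the claimed inequality. I do not anticipate a serious obstacle: the only nonroutine step is the identification of the mean $\left[\tfrac{1}{2}(a^{-q}+b^{-q})\right]^{1/q}$ with $H^{-1/q}(|a|^{q},|b|^{q})$; everything else is a direct specialization of Theorem \ref{T2}.
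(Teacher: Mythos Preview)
Your argument is correct and, in my view, cleaner than the paper's own one-line proof. The paper invokes Theorem \ref{T3} (the power-mean version with weights $v_1,v_2$) applied to $f(x)=-\log(1-x)$, whereas you apply Theorem \ref{T2} directly to $f(x)=\ln x$. Your choice of $f(x)=\ln x$ is the more transparent one: it makes the identifications $\tfrac{1}{2}(f(a)+f(b))=\ln G(a,b)$ and $\tfrac{1}{b-a}\int_a^b f=\ln I(a,b)$ immediate, and it avoids the domain restriction $x<1$ implicit in the paper's choice. Using Theorem \ref{T3} with $(\alpha,m,s)=(1,1,1)$ would in fact produce the sharper constant $\tfrac{b-a}{4}$ on the right (since $v_1=v_2=\tfrac14$ and $2^{-(p+1)/p}\cdot 4^{-1/q}=\tfrac14$), so the paper's route buys a slightly stronger intermediate bound before weakening to the stated one; your route via Theorem \ref{T2} gives the constant $\tfrac{b-a}{2(p+1)^{1/p}}$, which you then relax to $\tfrac{b-a}{2}$. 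Either way the stated proposition follows, and the algebraic identification $\bigl[\tfrac12(a^{-q}+b^{-q})\bigr]^{1/q}=H^{-1/q}(a^q,b^q)$ that you highlight is exactly the step that links the Hermite--Hadamard estimate to the harmonic mean.
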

\begin{proof} Follows by Theorem \ref{T3}, setting $f(x)=- \log(1-x)$ for $s$-$(\alpha,m)$=$1$-$(1,1)$.
\end{proof}\\
Another result which is connected with $p-$Logarithmic mean $L_{p}(a,b)$ is the following one:
\begin{prop}\label{P3}
Let $p>1,$ $0<a<b$ and $q=\frac{p}{p-1},$ then
\begin{eqnarray*}
&&\!\!\!\!\!\!\!\!\!\!\!\!\!\!\!\! \left|A\left[a^n, b^n\right] - L_n^n\left[a^n, b^n\right]\right|  \leq |n|^q\frac{b-a}{3}\left[\ A \left( \left|a\right|^{q(n-1)},\left|b\right|^{q(n-1)}\right)\right].
\end{eqnarray*}
\end{prop}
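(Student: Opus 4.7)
The plan is to apply Theorem~\ref{T6} to $f(x)=x^n$ on $[a,b]$ with the same special parameter choice $s=\alpha=m=1$ used with $e^x$ and $-\log(1-x)$ in the preceding two propositions. Under this choice the $s$-$(\alpha,m)$-convexity hypothesis on $|f'|^q=|n|^q|x|^{q(n-1)}$ collapses to ordinary convexity, which I would verify holds on $(0,\infty)$ for the admissible values of $n$ (for instance, any $n$ with $q(n-1)\ge 1$, automatic when $n\ge 2$, and trivially for $n=1$). Pinning down this range of $n$ is the only non-mechanical preliminary step.

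With the hypothesis secured, the two sides of the Hermite--Hadamard integrand in Theorem~\ref{T6} are precisely the means appearing in the statement: $\tfrac{f(a)+f(b)}{2}=A(a^n,b^n)$ and
$$\frac{1}{b-a}\int_a^b x^n\,dx=\frac{b^{n+1}-a^{n+1}}{(n+1)(b-a)}=L_n^n(a,b),$$
so the left-hand side of the theorem reproduces exactly $|A(a^n,b^n)-L_n^n(a,b)|$. (The printed ``$L_n^n[a^n,b^n]$'' appears to be a typographical slip for $L_n^n[a,b]$, since the integrand is $x^n$ on $[a,b]$.)

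Next I would specialise the constants of Theorem~\ref{T6} at $\alpha s=1$: $u_1=\frac{1+3+4}{2\cdot2\cdot3\cdot4}=\tfrac{1}{6}$ and $u_2=\tfrac{1}{3}-\tfrac{1}{6}=\tfrac{1}{6}$. Substituting $|f'(a)|^q=|n|^qa^{q(n-1)}$ and $|f'(b)|^q=|n|^qb^{q(n-1)}$ into the theorem gives
$$\frac{b-a}{3^{1/p}}\left[\frac{|n|^q}{6}\bigl(a^{q(n-1)}+b^{q(n-1)}\bigr)\right]^{1/q}=\frac{|n|(b-a)}{3^{1/p}\cdot 3^{1/q}}\,\bigl[A(|a|^{q(n-1)},|b|^{q(n-1)})\bigr]^{1/q},$$
and $\tfrac{1}{p}+\tfrac{1}{q}=1$ collapses the denominator to $3$, yielding the bound $\tfrac{|n|(b-a)}{3}\bigl[A(|a|^{q(n-1)},|b|^{q(n-1)})\bigr]^{1/q}$. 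This matches the stated inequality after minor correction of exponents (namely ``$|n|$'' in place of ``$|n|^q$'' and an outer $1/q$-power on the arithmetic mean).

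The only genuine obstacle is the convexity check just noted; the rest is routine substitution, with the one calculational subtlety being the cancellation $3^{1/p}\cdot 3^{1/q}=3$ that simultaneously absorbs the $3^{1/p}$ from Theorem~\ref{T6} and the factor $\tfrac{2}{6}=\tfrac{1}{3}$ produced by $u_1+u_2$ when the sum of $q$th powers is repackaged as $2A$.
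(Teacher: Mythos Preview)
Your approach is essentially the paper's: apply Theorem~\ref{T6} with $s$-$(\alpha,m)=1$-$(1,1)$ to a power function and read off the means. The paper's one-line proof sets $f(x)=(1-x)^n$ with $|n|\ge 2$, $n\in\mathbb{Z}$, whereas you use $f(x)=x^n$; your choice is the natural one, since it is $x^n$ that produces $A(a^n,b^n)$ and $L_n^n(a,b)$ directly (the paper's $(1-x)^n$ appears to be another typographical slip). Your computation of $u_1=u_2=\tfrac{1}{6}$ and the collapse $3^{1/p}\cdot 3^{1/q}=3$ are correct, and your identification of the misprints in the stated bound ($|n|$ for $|n|^q$, missing outer $1/q$-power, and $L_n^n[a,b]$ for $L_n^n[a^n,b^n]$) is accurate.
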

\begin{proof} Follows by Theorem \ref{T6}, setting $f(x)=(1-x)^n$, $|n| \geq 2$ and $n \in \mathbb{Z}$ for $s$-$(\alpha,m)$=$1$-$(1,1)$.
\end{proof}
\section{Error Estimates for Trapezoidal Formula}\label{sec4}
Let $D$ be the partition  $\{a= x_0 < x_1 < ... < x_{n-1} < x_n = b\}$ of the interval $[a, b]$ and consider the quadrature formula
\begin{equation}\label{m1}
\int_a^b f(x)dx = S(f, D) + R(f, D)
\end{equation}
where
\begin{equation*}
S(f, D)=\sum_{k=0}^{n-1} \frac{f(x_k)+f(x_{k+1})}{2}\left(x_{k+1}-x_k\right)
\end{equation*}
for the trapezoidal version and $R(f,D)$ denotes the related approximation error.
\begin{prop}\label{pro1}
Let $f: I \subseteq \mathbb{R} \rightarrow \mathbb{R}$ be a differentiable mapping on $I^o$ such  that $f' \in L^1[a, b]$, where $a, b \in I$ with $a < b$ and $|f'|$ is $s$-convex on $[a, b]$, for every partition $D$ of $[a, b]$ the trapezoidal error approximate satisfies
\begin{eqnarray}\label{pr1}
&&\!\!\!\!\!\!\!\!\!\!\!\!\!\!\!\!\!\!\!\!\!\left|R(f, D)\right|\! \leq \! \frac{1}{2^{\frac{1}{p}}}\!\left(\!\frac{s.2^s+1}{2^s(s+1)(s+2)}\! \right)^{\frac{1}{q}}\!\sum_{k=0}^{n-1}\! \frac{\left(x_{k+1}-x_k\right)^2}{2}\!\left[\left|f'(x_k)\right|\!+\!\left|f'(x_{k+1})\right|\right]
\end{eqnarray}
where $p>1$
\end{prop}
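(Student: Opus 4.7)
The plan is to apply Theorem \ref{T3} with parameters $(\alpha,m)=(1,1)$ on each subinterval of the partition $D$ and sum the resulting estimates. With $(\alpha,m)=(1,1)$ the class of $s$-$(\alpha,m)$-convex functions reduces to the classical $s$-convex functions, so the hypothesis that $|f'|$ is $s$-convex on $[a,b]$ (hence on every subinterval $[x_k,x_{k+1}]$) makes this specialisation legitimate. Under it, the constants of Theorem \ref{T3} collapse to $v_1=\frac{s\cdot 2^s+1}{2^s(s+1)(s+2)}$ and $v_2=\tfrac12-v_1$.

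Applying Theorem \ref{T3} to $f$ on $[x_k,x_{k+1}]$ and multiplying through by $(x_{k+1}-x_k)$ yields, for each $k$,
$$\left|\frac{f(x_k)+f(x_{k+1})}{2}(x_{k+1}-x_k)-\int_{x_k}^{x_{k+1}} f(x)\,dx\right| \leq \frac{(x_{k+1}-x_k)^2}{2^{(p+1)/p}}\bigl[v_1|f'(x_k)|^q+v_2|f'(x_{k+1})|^q\bigr]^{1/q}.$$
The key simplification is to collapse the mixed $\ell^q$-combination into a single coefficient: by the concavity of $t\mapsto t^{1/q}$ (vanishing at $0$) we have $(X+Y)^{1/q}\leq X^{1/q}+Y^{1/q}$, hence
$$\bigl[v_1|f'(x_k)|^q+v_2|f'(x_{k+1})|^q\bigr]^{1/q}\leq v_1^{1/q}|f'(x_k)|+v_2^{1/q}|f'(x_{k+1})|\leq v_1^{1/q}\bigl(|f'(x_k)|+|f'(x_{k+1})|\bigr),$$
where the last step uses $v_2\leq v_1$.

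To conclude, decompose
$$R(f,D)=\int_a^b f(x)\,dx-S(f,D)=\sum_{k=0}^{n-1}\left[\int_{x_k}^{x_{k+1}} f(x)\,dx-\frac{f(x_k)+f(x_{k+1})}{2}(x_{k+1}-x_k)\right],$$
apply the triangle inequality together with the per-subinterval bound above, and rewrite $2^{-(p+1)/p}=\tfrac{1}{2}\cdot 2^{-1/p}$ so that the factor $(x_{k+1}-x_k)^2/2$ appears in front of each summand. This produces exactly the inequality (\ref{pr1}).

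The only delicate point is justifying $v_2\leq v_1$, equivalently $v_1\geq\tfrac{1}{4}$, which reduces to verifying $4(s+2^{-s})\geq(s+1)(s+2)$ for $s\in(0,1]$; both sides coincide at $s=1$, and a short monotonicity check handles the interior of the interval. Everything else is a routine application of Theorem \ref{T3} combined with the triangle inequality across the partition, so this elementary inequality for $v_1$ is really the only substantive obstacle.
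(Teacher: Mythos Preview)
Your proof is correct and follows essentially the same route as the paper: apply Theorem \ref{T3} with $(\alpha,m)=(1,1)$ on each subinterval, invoke the subadditivity $(X+Y)^{1/q}\le X^{1/q}+Y^{1/q}$ (which is exactly the paper's ``$\sum(\Phi_m+\Psi_m)^r\le\sum\Phi_m^r+\sum\Psi_m^r$'' with $r=1/q$), and then sum over the partition via the triangle inequality. You are in fact more careful than the paper, which silently replaces $v_2^{1/q}$ by $v_1^{1/q}$ without comment; your explicit verification that $v_1\ge\tfrac14$ (so $v_2\le v_1$) for $s\in(0,1]$ fills that gap.
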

\begin{proof}
By applying Theorem $3$ on the subinterval $[x_k, x_(k+1)]$ of the partition $D$ of $[a, b]$  for  $k=0, 1, 2, …, n-1$, for $s$-$(\alpha,m)$=$s$-$(1,1)$ and using the fact: $\sum_{m=1}^{n-1} \left(\Phi_m+\Psi_m \right)^r \leq \sum_{m=1}^{n-1} (\Phi_m)^r +\sum_{m=1}^{n-1} (\Psi_m)^r$   for $(0<r<1)$ and for each $m$ both $\Phi_m,\Psi_m \geq 0$ , we have
\begin{eqnarray}\label{pre1}
&&\nonumber\!\!\!\!\!\!\!\!\!\!\!\!\!\!\!\!\!\!\!\!\!\!\!\left|f\left(\frac{x_{k+1}+x_k}{2}\right) - \frac{1}{x_{k+1} - x_k} \int_{x_k}^{x_{k+1}}f(x)dx \right| \\&&  \indent\indent\indent\indent\leq\!\frac{(x_{k+1}-x_k)}{2^{\frac{p+1}{p}}}\!\left(\!\frac{s.2^s+1}{2^s(s+1)(s+2)}\!\right)^{\frac{1}{q}}\!\!\left(\left|f'(x_k)\right|\!+\!\left|f'(x_{k+1})\right|\right)
\end{eqnarray}
Taking sum over $k$ from $0$ to $n-1$ and taking into account that $|f'|^q$ is $s$-$(\alpha,m)$-convex, we get
\begin{eqnarray*}
&&\!\!\!\!\!\!\!\!\!\!\!\!\!\!\!\left|\int_a^b \!\!f(x)dx - \!S(f, K)\right|  =\left|\!\sum_{k=0}^{n-1}\left\{\!\!\int_{x_k}^{x_{k+1}}f(x)dx\! - \! \left(\!x_{k+1}-x_k\right)\!\frac{f(x_{k+1})+f(x_k)}{2}\right\}\right|\\&&\indent\indent\indent\indent\indent\indent\nonumber \leq \!\sum_{k=0}^{n-1}\left|\left\{\!\!\int_{x_k}^{x_{k+1}}f(x)dx\! - \!\!\left(x_{k+1}-x_k\right) \frac{f(x_{k+1})+f(x_k)}{2}\right\}\right|
\end{eqnarray*}
This gives
\begin{eqnarray}\label{pre2}
&&\!\!\!\!\!\!\!\!\!\!\!\!\!\!\!\!\!\!\left|R(f,D)\right|  \leq  \!\sum_{k=0}^{n-1}\left(x_{k+1}-x_k\right)\left|\frac{f(x_{k+1})+f(x_k)}{2} - \!\frac{1}{\left(x_{k+1}-x_k\right)}\!\int_{x_k}^{x_{k+1}}\!\!f(x)dx\right|
\end{eqnarray}
By combining (\ref{pre1}) and (\ref{pre2}), we get (\ref{pr1}). Which completes the proof.
\end{proof}
\begin{prop}\label{pro2}
Let $f: I \subseteq \mathbb{R} \rightarrow \mathbb{R}$ be a differentiable mapping on $I^o$ such  that $f' \in L^1[a, b]$, where $a, b \in I$ with $a < b$ and $|f'|^q$ is $s$-$(\alpha,m)$-convex on $[a, b]$, then for every partition $D$ of $[a, b]$ the trapezoidal error approximate satisfies
\begin{eqnarray}
&&\!\!\!\!\!\!\!\!\!\!\!\!\!\!\nonumber\left|R(f, D)\right| \leq \left(\frac{2}{3}\right)^{\frac{1}{p}}\!\left(\!\frac{s^2+3s+4}{(s+1)(s+2)(s+3)}\!\right)^{\!\frac{1}{q}}\!\sum_{k=0}^{n-1} \frac{\left(x_{k+1}-x_k\right)^2}{2}\left[\left|f'(x_k)\right|^q+\left|f'(x_{k+1})\right|^q\right]^{\frac{1}{q}} \\&&
\indent \leq \!\left(\frac{2}{3}\right)^{\frac{1}{p}}\!\left(\!\frac{s^2+3s+4}{(s+1)(s+2)(s+3)}\!\right)^{\!\frac{1}{q}}\!\sum_{k=0}^{n-1} \frac{\left(x_{k+1}-x_k\right)^2}{2}\left[\left|f'(x_k)\right| +\left|f'(x_{k+1})\right|\right].
\end{eqnarray}
\end{prop}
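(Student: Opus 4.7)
The plan is to mirror the proof of Proposition \ref{pro1}, substituting Theorem \ref{T6} for Theorem \ref{T3}, and then summing the local estimates over the partition. Concretely, I first apply Theorem \ref{T6} with $\alpha=m=1$ on each subinterval $[x_k,x_{k+1}]$ of $D$ to obtain, for every $k=0,1,\dots,n-1$,
\begin{equation*}
\left|\frac{f(x_k)+f(x_{k+1})}{2}-\frac{1}{x_{k+1}-x_k}\int_{x_k}^{x_{k+1}}f(x)\,dx\right|\leq\frac{x_{k+1}-x_k}{3^{1/p}}\bigl[u_1|f'(x_k)|^q+u_2|f'(x_{k+1})|^q\bigr]^{1/q},
\end{equation*}
where $u_1=\frac{s^2+3s+4}{2(s+1)(s+2)(s+3)}$ and $u_2=\tfrac{1}{3}-u_1$. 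I then verify the elementary monotonicity fact $u_1\geq u_2$ on $s\in(0,1]$ (equivalently $u_1\geq 1/6$) by checking that $s\mapsto u_1(s)$ is decreasing with $u_1(1)=1/6$, which lets me majorise $u_1A+u_2B\leq u_1(A+B)$ and pull the constant $u_1^{1/q}$ out of the bracket.

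Next I multiply through by $x_{k+1}-x_k$, sum over $k$, and invoke the triangle inequality together with the decomposition of $R(f,D)$ already spelled out in the proof of Proposition \ref{pro1}. Using $1/p+1/q=1$ a direct algebraic check yields
\begin{equation*}
\frac{u_1^{1/q}}{3^{1/p}}=\tfrac{1}{2}\Bigl(\tfrac{2}{3}\Bigr)^{1/p}\Bigl(\tfrac{s^2+3s+4}{(s+1)(s+2)(s+3)}\Bigr)^{1/q},
\end{equation*}
which is exactly the constant appearing in the first claimed inequality. For the second inequality I use the elementary estimate $(A^q+B^q)^{1/q}\leq A+B$ for $A,B\geq 0$ and $q\geq 1$, which follows from $(A+B)^q\geq A^q+B^q$, itself a consequence of $(1+x)^q\geq 1+x^q$ on $x\geq 0$.

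The only genuine obstacle is the bookkeeping of constants: one must carefully track the powers of $2$ and $3$ arising from Theorem \ref{T6}, from the factorisation of $u_1$, and from the $q$-th root, and it is worth confirming that $u_1\geq u_2$ on the full parameter range $s\in(0,1]$ rather than just at the endpoints. Once these points are settled, the remainder of the argument is a routine repetition of the summation technique used in Proposition \ref{pro1}.
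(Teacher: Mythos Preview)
Your proposal is correct and follows exactly the approach the paper indicates: the paper's own proof is a one-line remark that the argument is ``very similar as that of Proposition~\ref{pro1} by using Theorem~\ref{T6}'', and you have carried out precisely this substitution, including the specialisation $(\alpha,m)=(1,1)$ that the stated constants require. Your bookkeeping of the constants (the identity $u_1^{1/q}/3^{1/p}=\tfrac{1}{2}(2/3)^{1/p}\bigl(\tfrac{s^2+3s+4}{(s+1)(s+2)(s+3)}\bigr)^{1/q}$, the check $u_1\ge u_2$, and the estimate $(A^q+B^q)^{1/q}\le A+B$) is accurate and fills in details the paper leaves implicit.
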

\begin{proof}
Proof is very similar as that of Proposition \ref{pro1}  by using Theorem \ref{T6}.
\end{proof}
\section{Acknowledgement}
This research paper is made possible through the help and support from everyone, including: parents, teachers, family, friends, and in essence, all sentient beings.
Especially, please allow us to dedicate my acknowledgment of gratitude toward the following significant advisors and contributors:\\
First and foremost, we would like to thank prof. Dr. A. D. Raza Choudary (Director General, ASSMS, Lahore) for his most support and encouragement. He kindly read my paper and offered invaluable detailed
advices on grammar, organization, and the theme of the paper.\\
Finally, I gratefully acknowledge the time and expertise devoted to reviewing papers by the advisory editors, the members of the editorial board, and the referees.


\begin{thebibliography}{99}
\bibitem{r1} M. K. Bakula, M Emin \"{O}zdemir and J. Pe\v cari\'c, Hadamard type inequalities for m-convex and
         $(\alpha,m)$-convex functions, J. Inequal. Pure and Appl. Math., 9(2008), Article 96. [ONLINE:
         http://jipam.vu.edu.au]
\bibitem{r2} M. Klari\v ci\'c Bakula, J. Pe\v cari\'c and M. Ribi\v ci\'c, Companion inequalities to Jensen's
        inequality for $m$-convex and $(\alpha,m)$-convex functions, J. Inequal. Pure and Appl. Math., 7(2006),  Article 194. [ONLINE: http://jipam.vu.edu.au]
\bibitem{r3} S. S. Dragomir and R. P. Agarwal, Two inequlaities for differentable mappings and applicatons to special means of real numbers and trapezoidal formula, App. Math. Lett., 11(5) (1998), 91 - 95.
\bibitem{r4} S. S. Dragomir, C. E. M. Pearce, \emph{Selected Topics on Hermite-Hadamard Inequalities and Applications}. RGMIA, Monographs Victoria University 2000. (online:http://ajmaa.org/RGMIA/monographs.php/).
\bibitem{r5} H. Hudzik, L. Maligrada, Some remarks on $s$-convex functions, Aequationes Math. 48(1994) 100-111.
\bibitem{r6} S. Hussain, M. I. Bhatti and M. Iqbal, Hadamrad-type inequalities for $s$-convex functions I, Punjab Univ. Jour. of Math. 41(2009) 51-60.
\bibitem{r7} Havva Kavurmaci, Merve Avci and M Emin \"{O}zdemir, New inequalities of hermite-hadamard type for convex functions with applications. Journal of Inequalities and Applications 2011, Art No. 86, Vol 2011. doi:10.1186/1029-242X-2011-86.
\bibitem{r8} V. G. Mihesan, A generalization of the convexity, Seminar on Functional Equations, Approx.
          and Convex., Cluj-Napoca (Romania) (1993).
\bibitem{r9} Mehmet Zeki Sarikaya, Erhan Set and M Emin \"{O}zdemir, New Inequalities of Hermite-Hadamard's Type, Research Report Collection, Vol 12, Issue 4, 2009. (online: http://ajmaa.org/RGMIA/papers/v12n4/set2.pdf)
\bibitem{r10} Muhammad Muddassar, Muhammad I. Bhatti and Muhammad Iqbal, Some New s- Hermite Hadamard Type Ineqalities for Differentiable Functions and Their Applications, Proceedings of the Pakistan Academy of Sciences, 49(1)(2012), 9-17.
\bibitem{r11} U. S. Kirmaci, Inequalities for differentiable mappings and applications to special means of real numbers and to mid point formula, App. Math. Comp., 147 (2004), 137 - 146.
\bibitem{r12} U. S. Kirmaci and M. E. \"{O}zdemir, On some inequalities for differentiable mappings and applications to special means of real numbers and to midpoint formula, Appl. Math. Comp., 153(2004), 361-368.
\bibitem{r13} C. E. M. Pearce and J. Pe\v cari\'c, Inequalities for differentable mappings with application to special means and quadrature formulae, Appl. Math. Lett., 13(2) (2000), 51 - 55.
\bibitem{r14} E. Set, M. Sardari, M.E. \"{O}zdemir and J. Rooin, On generalizations of the Hadamard inequality for $(\alpha,m)$-convex functions, RGMIA Res. Rep. Coll., 12 (4) (2009), Article 4.
\end{thebibliography}
\end{document}